\newtheorem*{mydef}{Definition}
\newtheorem{thm}{Theorem}[section]
\newtheorem*{rmk}{Remark}
\newtheorem{rmrk}{Remark}
\newtheorem{prop}{Proposition}[section]
\newtheorem*{thmx}{Theorem}
\newcommand{\longisomto}{\overset{\sim}{\longrightarrow}}
\numberwithin{equation}{section}
\title{A UNIVERSAL FORMULA FOR THE j-INVARIANT OF THE CANONICAL LIFTING}
\author{Altan Erdo\u{g}an\thanks{Sponsored by T\"{U}B\.{I}TAK (an agency of Turkish government)}}
\begin{document}
\date{}
\maketitle{}
\begin{abstract}
We study the $j$-invariant of the canonical lifting of an elliptic curve as a Witt vector. We prove that its Witt coordinates lie in an open affine 
subset of the $j$-line and deduce the existence of a universal formula for the $j$-invariant of the canonical lifting.
The canonical lifting of the elliptic curves with 
$j$-invariant 0 and 1728 over any characteristic are also explicitly found.
\end{abstract}

\renewcommand{\thefootnote}{\fnsymbol{footnote}} 
\footnotetext{\emph{Key words.} elliptic curves, Serre-Tate theorem, canonical lifting, division polynomials}     
\renewcommand{\thefootnote}{\arabic{footnote}} 

\section{Introduction}
  Let $k$ be an algebraically closed field of characteristic $p$ and $W(k)$ be the ring of $p$-typical Witt vectors of $k$. Let $E$ be an ordinary elliptic curve over $k$. 
A consequence of the Serre-Tate theorem is that up to isomorphism there exists a unique elliptic curve $\mathbb{E}$ over $W(k)$ 
satisfying some certain conditions whose reduction modulo $p$ is isomorphic to $E$. Throughout the paper we will give the definition and other properties of the canonical lifting.
By definition, the $j$-invariant of $\mathbb{E}$, denoted by $j(\mathbb{E}) \in W(k)$ depends only on the $j$-invariant of $E$, say $j_0$. If we set 
$k^{\text{ord}}=\{j_0 \in k | \text{ elliptic curves with } j\text{-invariant }j_0\text{ are ordinary}\}$, we can define the following function;
\begin{eqnarray}
\Theta: k^{\text{ord}} &\longrightarrow& W(k), \nonumber \\
j_0 &\longmapsto& j(\mathbb{E})=(j_0,j_1,...) \nonumber
\end{eqnarray}
where $\mathbb{E}$ is the canonical lifting of $E$ and each $j_i$ is a function of $j_0$. 
The question of finding the canonical lifting in this form was first given in \cite{serretatelubin}. Here we will prove the following theorem.
\begin{thm}[Main theorem]
\label{mainthm}
 Let $F$ be a perfect field of characteristic $p>0$ with a fixed algebraic closure $k$, and $J$ be an indeterminate. Let $\phi_p(J)$ denote the \emph{Hasse polynomial}, i.e. the 
polynomial in $\mathbb{F}_p[J]$ whose roots are the supersingular $j$-values in characteristic $p$. Let
\begin{eqnarray}
 A=F[J,1/\phi_p(J)], \hspace{0.5cm} \nonumber
\end{eqnarray}
\begin{itemize}
 \item[\emph{\textbf{(i)}}]
There exist $f_i \in A$ for all $i \in \mathbb{Z}_{\geq1}$ such that for any $j_0 \in k^{\emph{ord}}$, 
\begin{eqnarray}
\Theta(j_0)=(j_0, f_1(j_0), f_2(j_0),...f_n(j_0),...) \nonumber
\end{eqnarray}
where for any such $j_0$ we see $f_i \in A$ as the homomorphism defined as
\begin{eqnarray}
 j_i:A \longrightarrow k, \hspace{0.5cm} J \longmapsto j_0 \nonumber
\end{eqnarray}
\item[\emph{\textbf{(ii)}}] If $j_0=0$ is an ordinary $j$-value then $\Theta(0)=0 \in \mathbb{Z}_p$, and similarly if $j_0=1728$ is an ordinary $j$-value then $\Theta(1728)=1728 \in 
\mathbb{Z}_p$.
\end{itemize}
\end{thm}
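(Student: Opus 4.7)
The plan is to prove part (i) by an inductive construction of the Witt coordinates, working with the universal ordinary elliptic curve over $A$ and exploiting the Serre-Tate characterization of the canonical lifting via division polynomials. Part (ii) will then follow from the classical observation that the CM curves $y^2 = x^3 + 1$ and $y^2 = x^3 + x$ over $\mathbb{Z}_p$ are canonical lifts of their ordinary reductions.

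For part (i), I would fix a Weierstrass model $\mathcal{E}/A$ of the universal ordinary elliptic curve with generic $j$-invariant $J$, and build a compatible sequence of elliptic lifts $\mathbb{E}_n/W_n(A)$ whose successive Witt coordinates are $(J,f_1,\ldots,f_{n-1})$. The target is that for every ordinary $j_0$, the base change of $\mathbb{E}_n$ along $J\mapsto j_0$ is the canonical lifting modulo $p^n$. The inductive step consists of producing $f_n\in A$ extending $\mathbb{E}_n$ to $\mathbb{E}_{n+1}$. By Serre-Tate, this amounts to producing a flat subgroup scheme of order $p$ in $\mathbb{E}_{n+1}$ lifting $\ker F_{\mathcal{E}}$, and expressed via the $p$-th division polynomial $\psi_p(x;\mathbb{E}_{n+1})$, the subgroup is cut out by a monic factor of degree $(p-1)/2$ lifting the one already constructed over $W_n(A)$. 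A Hensel-type obstruction analysis should reduce the existence of this factor to a single linear equation for $f_n$ whose leading coefficient is a unit multiple of a power of $\phi_p(J)$; since $\phi_p$ is invertible in $A$, there is a unique solution $f_n\in A$, and specialization at each ordinary $j_0$ is automatic from functoriality of the construction.

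For part (ii), assume $j_0=0$ is ordinary, so necessarily $p>3$ and $p\equiv 1\pmod 3$. The curve $E_0:y^2=x^3+1$ over $\mathbb{Z}_p$ has complex multiplication by $\mathbb{Z}[\zeta_3]$, and since $p$ splits as $\pi\bar\pi$ in this ring, the endomorphism $[\pi]$ of $E_0$ reduces modulo $p$ to the Frobenius of $E_0\otimes\mathbb{F}_p$. By the Serre-Tate criterion, $E_0$ is the canonical lifting of its reduction, hence $\Theta(0)=j(E_0)=0\in\mathbb{Z}_p\subset W(k)$. The analogous argument with $y^2=x^3+x$ and $\mathbb{Z}[i]$ for $p\equiv 1\pmod 4$ gives $\Theta(1728)=1728$.

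The main obstacle lies in the inductive step of part (i): one must show that the Hensel obstruction to lifting the kernel subgroup genuinely produces a linear equation in $f_n$, and that its leading coefficient is exactly a power of the Hasse polynomial. This requires a careful analysis of $\psi_p(x;\mathbb{E}_n)$ and the classical link between its coefficients and the Hasse invariant, which is precisely what distinguishes the ordinary from the supersingular locus. Once this is established, part (ii) is a direct consequence of CM theory combined with the Serre-Tate characterization already deployed in part (i).
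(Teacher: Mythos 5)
Your part (ii) is correct and takes a genuinely different route from the paper: you use explicit CM curves $y^2=x^3+1$ and $y^2=x^3+x$ over $\mathbb{Z}_p$ together with the fact that a lift whose endomorphism ring lifts fully is the canonical lift, whereas the paper argues via the Messing isomorphism $\text{End}(\mathbb{E})\cong\text{End}(E)$ and the observation that $\text{Aut}$ of order $\geq 6$ (resp.\ $\geq 4$) over a characteristic-zero field forces $j=0$ (resp.\ $1728$). Both arguments are sound; yours is more constructive, the paper's is shorter and avoids having to track whether $\pi$ or $\bar\pi$ reduces to Frobenius.

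Part (i) has a genuine gap at the very first step: there is no elliptic curve over $A=F[J,1/\phi_p(J)]$ with $j$-invariant $J$. The $j$-line is only a coarse moduli space, and the standard model $y^2+xy=x^3-36x/(J-1728)-1/(J-1728)$ (which the paper uses) only lives over $R=A[1/(J(J-1728))]$. Any Weierstrass model of the form $y^2=x^3+ax+b$ with $j$-invariant $J$ requires adjoining $\sqrt[3]{J}$ and $\sqrt{J-1728}$ (this is the ring $B$ the paper introduces). Consequently your inductive construction, even granting the Hensel obstruction analysis, would produce $f_n\in R$ (with possible poles at $J=0,1728$), not $f_n\in A$. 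The paper's resolution is precisely to play two models against each other: the model over $R$ shows $j_i\in R$, the model over the integral extension $B$ shows $j_i\in B'$, and since $A$ is integrally closed in $R$ and $B'$ is integral over $A$, one gets $j_i\in R\cap B'=A$. Your proposal has no mechanism that recovers regularity at $J=0$ and $J=1728$; in fact your own appeal to a universal Weierstrass model over $A$ is where that regularity is silently assumed. Separately, the assertion that the Hensel obstruction is a single linear equation in $f_n$ with leading coefficient a unit times a power of $\phi_p(J)$ is the kind of statement Finotti establishes by hard computation, not something that follows formally; it would need to be proved, and one would also need to pass from the Weierstrass coefficients the obstruction actually controls to the $j$-invariant without introducing new denominators. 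The paper sidesteps all of this computational work by deducing $j_i\in R$ from abstract Serre-Tate theory over $W_m(R')$ plus a Kummer-theoretic descent from $K'$ to $K$ (Theorem \ref{descent_to_sep}), a route your proposal does not engage with.
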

  The first assertion of the theorem mean that we have a universal formula for the $j$-invariant of the canonical lifting, and Witt entries of this universal 
formula are almost polynomials. The second assertion is independent of the previous one and proved with a different argument. 

  We proceed as follows. In \textsection{2} we give a brief overview of the Serre-Tate theorem. In \textsection{3}, we generalize the notion of the 
canonical lifting for elliptic curves defined over $\mathbb{F}_p$-schemes satisfying certain hypotheses. 
In \textsection{4} we use fppf-Kummer theory to prove that
the canonical lifting of an ordinary elliptic curve over an imperfect field is defined over the Witt ring of this imperfect field. 
This result first appeared in 
\cite{finotti}, but here we give a different proof. Finally in the last section we apply the results of \textsection{3} and \textsection{4} to 
universal families of ordinary elliptic curves to prove (i) of Theorem \ref{mainthm}. We also prove (ii) in the last section.

  We fix the following notation. For any schemes $X/T$ and 
$U/T$ we set $X_U := X \times_{T} U$. If $U=\,$Spec$\,C$ is affine, we may use $X_{C}$ instead of $X_{U}$. If $T=\,$Spec$\,B$ is also affine, we may also use $X \otimes_{B} C$ for $X_U$.
For $t \in T$ with residue field $\kappa(t)$, we denote $X \times_{T} $Spec$\,\kappa(t)$ by $X_t$. For any group scheme $G/T$, $G[N]$ denotes the kernel
of the multiplication by $N$ on $G$. If $G$ is a $p$-divisible group then we may write $G=(G_n,i_n)$ where $G_n=$ker$(p^n:G_{n+1} \longrightarrow G_{n+1})$ and 
$i_n: G_n \longrightarrow G_{n+1}$.

\subsection*{Acknowledgments} I would like to thank my advisor Sinan \"{U}nver for his invaluable support and comments at every step of my PhD research and this paper and Brian Conrad 
for his invaluable suggestions regarding this paper.

\section{An overview of the Serre-Tate theorem}
  
  In this section we briefly recall some aspects of the Serre-Tate theorem. We restrict ourselves to the definition-construction of the 
canonical lifting which is directly used in the proofs. General references for a complete proof and a detailed analysis of the Serre-Tate theorem are \cite{Katz} and 
\cite{Messing}. For the sake of completeness we quote the following theorem from \cite{Katz} and call it as the \emph{general Serre-Tate theorem}.

\begin{thm}[General Serre-Tate theorem]
Let $A$ be a ring in which $p$ is nilpotent. Let $I$ be a nilpotent ideal of $A$, and put $A_0=A/I$. Let 
$\emph{AS}(A)$ denote the category of abelian schemes over $A$, and let $\emph{Def}(A,A_0)$ denote the category
of triples $(X_0,L,\epsilon)$ where $X_0$ is an abelian scheme over $A_0$, $L$ is a $p$-divisible group over $A$ 
and $\epsilon: L_0:=L \otimes_{A} A_0 \longrightarrow A_{0}[p^{\infty}]$ is an isomorphism. Then the functor
\begin{eqnarray}
X \longmapsto (X_0, X[p^{\infty}], \emph{the natural map}) \nonumber
\end{eqnarray}
is an equivalence of the categories $\emph{AS}(A)$ and $\emph{Def}(A,A_0)$.
\end{thm}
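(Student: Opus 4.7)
The plan is to follow the Drinfeld--Messing strategy. First, by a standard dévissage, reduce to the case of a thickening $A \twoheadrightarrow A_0$ whose kernel $I$ satisfies $I^2 = 0$ and $pI = 0$: filter the nilpotent ideal $I$ by successive square-zero quotients compatible with the $p$-adic filtration, and note that the statement of the theorem propagates through each infinitesimal step since equivalence of categories is preserved by composition of thickenings. This places us in the setting where $I$ is naturally an $A_0$-module and the crystalline deformation machinery applies most cleanly.

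For full faithfulness, I would invoke Drinfeld's rigidity lemma: given abelian schemes $X, Y$ over $A$, any homomorphism $f_0 \colon X_0 \to Y_0$ whose induced map on $p$-divisible groups lifts to a homomorphism $X[p^\infty] \to Y[p^\infty]$ admits a unique lift $f \colon X \to Y$. Uniqueness is the main content and proceeds as follows: the difference of two lifts of $f_0$ vanishes modulo $I$, so its image lands in $I \cdot \mathrm{Lie}(Y)$, which is annihilated by a fixed power of $p$ (since $p$ is nilpotent on $A$ and $I$ is a module over the reduction); combined with relative schematic density of $X[p^N]$ in $X$ for $N$ large, this forces the difference to vanish. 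Existence then reduces to gluing partial lifts prescribed on the $p$-power torsion with complementary data on the formal group part.

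Essential surjectivity is the core step. Given a triple $(X_0, L, \epsilon)$, the approach is via Grothendieck--Messing crystalline deformation theory: deformations of $X_0$ from $A_0$ to $A$ are classified by lifts of the Hodge filtration inside the evaluation $\mathbb{D}(X_0[p^\infty])(A)$ of the crystalline Dieudonné module on the thickening, and the same filtration data classifies deformations of the $p$-divisible group $X_0[p^\infty]$. Thus the datum $(L, \epsilon)$ specifies a unique Hodge-filtration lift, hence a unique abelian scheme $X / A$ deforming $X_0$, and one checks by construction that $X[p^\infty] \cong L$ compatibly with $\epsilon$. The main obstacle is precisely this essential surjectivity step, which depends on the nontrivial Grothendieck--Messing comparison between deformations of abelian schemes and deformations of their $p$-divisible groups; any route that avoids crystalline Dieudonné theory would require a considerably more intricate hand construction, gluing formal deformations along affine opens while simultaneously tracking the $p$-divisible group condition.
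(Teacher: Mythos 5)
The paper itself gives no proof of this statement: it is explicitly quoted from Katz's \emph{Serre-Tate Local Moduli} \cite{Katz} for the reader's convenience, with no argument supplied. So there is nothing in the paper to compare against line by line; the relevant comparison is to the proof in \cite{Katz}, which is the one the paper implicitly relies on.

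Your sketch is a correct outline, but it takes a genuinely different route from Katz's. Your dévissage to square-zero thickenings with $pI=0$ is standard and shared with Katz, and the full-faithfulness step via Drinfeld's rigidity lemma (the difference of two lifts lands in $I\cdot\mathrm{Lie}(Y)$, hence is killed by a bounded power of $p$, and relative schematic density of the $p$-power torsion finishes the job) is exactly Katz's argument. Where you diverge is essential surjectivity: you invoke Grothendieck--Messing crystalline Dieudonné theory, identifying deformations of $X_0$ and of $X_0[p^\infty]$ with lifts of the Hodge filtration in $\mathbb{D}(X_0[p^\infty])(A)$. That is Messing's proof, not Drinfeld--Katz's. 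Katz instead gives an entirely elementary construction: choosing $N$ so that $p^N$ annihilates both $I$ and the relevant graded pieces, one uses the fact that multiplication by $p^N$ on $X_0$ lifts canonically (by rigidity applied to $p^N\cdot\mathrm{id}$), and then builds $X$ by a pushout-type construction out of the canonical lift of $X_0/X_0[p^N]$ together with the given $p$-divisible group $L$, verifying representability by descent and by checking on torsion. The Messing route you propose is valid and arguably more conceptual, but it imports the full weight of crystalline Dieudonné theory and the Grothendieck--Messing comparison, whereas the Drinfeld--Katz argument needs only Drinfeld rigidity and elementary properties of $p$-divisible groups --- which is precisely why Katz's exposition is the one cited here. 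If you pursue the crystalline route you should also be explicit that the classification of abelian-scheme deformations by Hodge-filtration lifts is itself a nontrivial input (Grothendieck's deformation theory plus the comparison isomorphism), not a formal consequence of the $p$-divisible group side.
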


  Let $k$ be an algebraically closed field of characteristic $p>0$ and $A$ be an Artin local ring
with residue field $k$. In general we say that a lifting of a scheme $X \longrightarrow \,$Spec$\,k$ is a pair $(\mathbb{X}, \iota)$ where $\mathbb{X} \longrightarrow \,$Spec$\,A$ is 
a scheme over Spec$\,A$ and $\iota: \mathbb{X} \otimes_{A} k \longisomto X$ is an isomorphism. If $\iota$ is unique we omit it and just say that 
$\mathbb{X} \longrightarrow \,$Spec$\,A$ is a lifting of $X \longrightarrow \,$Spec$\,k$. We can replace Spec$\,A$ by any scheme with some residue field $k$ and still can define a 
lifting in a similar way, but for our purposes we only consider lifting over Artin local rings.
Given an ordinary abelian variety $X$ over $k$, the Serre-Tate theorem classifies all abelian
schemes defined over $A$ that lift $X$.  For such an ordinary abelian variety $X \longrightarrow \,$Spec$\,k$ and an abelian scheme 
$\mathbb{X} \longrightarrow \,$Spec$\,A$ lifting $X/k$, there are the associated $p$-divisible groups (= Barsotti-Tate groups) 
denoted by $X[p^{\infty}]$ and $\mathbb{X}[p^{\infty}]$
respectively which play an important role summarized in the following diagram;
\begin{eqnarray}
\{\text{Isomorphism classes of } \mathbb{X}/A \text{ lifting } X/k \} &\longisomto& \nonumber \\
\{\text{Isomorphism classes of } \mathbb{X}[p^{\infty}]/A \text{ lifting } X[p^{\infty}]/k \} &\longisomto& \nonumber \\
\text{Ext}_{A}(T_{p}(X)(k) \otimes \mathbb{Q}_{p}/\mathbb{Z}_{p}, \text{Hom}_{\mathbb{Z}_{p}}(T_{p}(X^D)(k),\hat{\mathbb{G}}_{m}) &\longisomto& \nonumber \\
\text{Hom}_{\mathbb{Z}_{p}}(T_{p}(X)(k) \otimes T_{p}(X^D)(k),\hat{\mathbb{G}}_{m}(A)), \nonumber
\end{eqnarray}
where $T_{p}(X)(k)$ is the Tate module of $X$, $X^D$ denotes the dual abelian variety, $\hat{\mathbb{G}}_{m}$
denotes the formal completion of the multiplicative group $\mathbb{G}_{m}$ and Ext$_{A}(-,-)$ denotes the extension
group of $A$-groups. General references for the properties of $p$-divisible groups are \cite{Tate2} and \cite{Messing}.
The above diagram shows that the set
\[
\{\text{Isomorphism classes of } \mathbb{X}/A \text{ lifting } X/k \}
\]
has a natural group structure. 
\begin{mydef}
With the above notation the unique abelian scheme $\mathbb{X}/A$ which corresponds to the identity element of the group
\begin{eqnarray}
\emph{Ext}_{A}(T_{p}(X)(k) \otimes \mathbb{Q}_{p}/\mathbb{Z}_{p}, \emph{Hom}_{\mathbb{Z}_{p}}(T_{p}(X^D)(k),\hat{\mathbb{G}}_{m})) \nonumber
\end{eqnarray}
is called the canonical lifting of $X/k$ over $A$.
\end{mydef}
\begin{rmrk}
\emph{In the introduction, the base of the canonical lifting is given to be a characteristic zero integral domain, but here we define it over an Artin local ring which indeed 
fits well to our purposes. At the end of this section we will see that this definition is justified.
}
\end{rmrk}
\begin{rmrk}
\label{perfectbases}
\emph{If we only assume that $k$ is perfect than the above diagram and the definition
still remain valid by a slight change of the objects involved.  
A complete study of equivalent definitions of the canonical lifting 
for perfect $k$ can be found in \cite[V.3 and the Appendix]{Messing}.}
\end{rmrk}

The particular case we are concerned with here is the case where $A=W_{n}(k)$, the ring of $p$-typical Witt vectors of length $n$. 
Recall that if $k$ is a perfect field of characteristic $p$ then $W_{n}(k)$ is an Artin local ring with residue field $k$ and maximal ideal $(p)$. 
See \cite{serrelocalfields} for the definition and basic facts about Witt vectors which we use here. In this case the canonical liftings $\mathbb{X}_{m}/W_{m}(k)$ are 
compatible with each other, i.e. for any $m \leq n$,
\[
\mathbb{X}_{n} \otimes_{W_{n}(k)} W_{m}(k) \longisomto \mathbb{X}_{m}.
\]
Thus the inverse system $(\mathbb{X}_{m}/W_{m}(k))_m$ defines a formal abelian scheme over $W(k)$ which can be 
algebraicized (\cite{Messing}, \textsection{V}.3.3). This abelian scheme is defined as the canonical lifting of $X/k$ over $W(k)$, and hence justifies our definition above.
If there is no confusion about the base we will just say the canonical lifting of $X/k$.

\section{Canonical lifting of families}
\label{Integrality}
  In this section we will show that we can extend the definition of the canonical lifting to elliptic curves defined over $\mathbb{F}_p$-schemes under some hypothesis. This will allow
us to mention about the canonical lifting of a family of elliptic curves. Main result of this section is Theorem \ref{mainthmB} which is stated and proved at the end of this section.
  
  We fix the following notation for this section. Let $F$ be a perfect field of characteristic $p$ and $R$ be a Noetherian integral $F$-algebra with fields of fractions $K$. 
We fix an algebraic closure of $K$, and denote it by $\bar{K}$. Let $K'$ be the perfect closure of $K$ (i.e. the maximal purely inseparable extension of $K$) in $\bar{K}$ and $R'$ 
be the integral closure of $R$ in $K'$. We also define the subrings
\begin{eqnarray}
R_{n}=R^{1/p^n}=\{x \in \bar{K} | x^{p^n} \in R\}. \nonumber
\end{eqnarray}

Note that $R_n$ is Noetherian, and $R'=\cup_{n} R_{n}$ and the morphism of schemes Spec$\,R' \longrightarrow$ Spec$\,R$ induced by the inclusion $R \hookrightarrow R'$ is a 
homeomorphism. If $s'\in \,$Spec$\,R'$ maps to $s\in \,$Spec$\,R$ then $\kappa(s')$ is the perfect closure of $\kappa(s)$ \cite{greenbergperfect}.
Let $E/R$ be an ordinary elliptic curve in the sense of \cite[\textsection{2} and \textsection{12}]{KM}. 
Let $E_n := E \otimes_{R} R_n$ where the base change is done via the $p^{n}$-th root homomorphism $R \longrightarrow R_n$.
Throughout this section $E/R$ and $E_n/R_n$ will always denote these elliptic curves defined here. To simplify notation we use $E$ also to denote the base extensions 
$E \otimes_{R,i} R_n$ and $E \otimes_{R,i}R'$ where $i$ is the inclusion map.

  Now let $T$ be the spectrum of a complete Noetherian local ring. Then for any finite locally free group scheme $G/T$, there is a unique exact sequence 
called the \emph{connected- \'etale sequence} of $G$,
\[
0 \longrightarrow G^0 \longrightarrow G \longrightarrow G^{et} \longrightarrow 0
\]
where $G^{0}$ and $G^{et}$ are connected and \'etale $T$-group schemes respectively. It is characterized by the fact that for any \'etale $T$-group $H$, any $T$-group homomorphism
$G \longrightarrow H$ factors through $ G \longrightarrow G^{et}$ \cite{Tate1}. By passage to limit we have 
a similar construction for $p$-divisible groups. If $G=(G_n,i_n)_n$ is a $p$-divisible group, then $G^{0}:=(G_n^{0}, i_n)$ and 
$G^{et}:=(G_n^{et}, i_n)$ are connected and  \'etale $p$-divisible groups respectively. By \cite{Tate2} we have an exact sequence of $p$-divisible groups
\[
 0 \longrightarrow G^0 \longrightarrow G \longrightarrow G^{et} \longrightarrow 0.
\]
Note that if $T=$Spec$\,F$ then the connected-\'etale sequence of $G$ splits. In particular if $\tilde{E}$ is an ordinary elliptic curve over $F$ then the connected-\'etale sequence
of $\tilde{E}[p^{\infty}]$ splits over $F$.
This fact is very crucial in the construction of the canonical lifting (Recall Remark \ref{perfectbases}).
So in order to generalize the notion of the canonical lifting we need a similar result 
when $F$ is replaced by another scheme. But \emph{a priori} we don't even know that $E[p^{\infty}]$ has a connected-\'etale sequence over an arbitrary base. 
The following theorems of Messing which we directly quote from \cite{Messing} and the Proposition \ref{splitting} below allow us to overcome this problem. 

\begin{thm}
\label{uniqueness}
Let $S$ be any scheme and $f:\,X \longrightarrow S$ be a finite locally free morphism of schemes. 
Then the function $s \longmapsto$ (separable rank of $X_{s}$) is locally constant on $S$ if and only if there are morphisms 
$i:\,X \longrightarrow X'$ and $f':\,X' \longrightarrow S$ which are finite and locally free with $i$ radiciel and surjective, $f'$  \'etale
and $f = f' \circ i$. The factorization is unique up to unique isomorphism and is functorial in $X/S$.
\end{thm}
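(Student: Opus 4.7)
My plan is to recast the statement in terms of a universal property, derive uniqueness and the easy direction from this, and then attack existence via a fiberwise construction followed by a spreading-out argument. The universal property in question is that $i: X \to X'$ should be initial among $S$-morphisms from $X$ to finite étale $S$-schemes; granting this, both uniqueness up to unique isomorphism and functoriality in $X/S$ are formal. The implication ``factorization implies locally constant separable rank'' is then immediate: on each fiber, $i_s: X_s \to X'_s$ realizes $X'_s$ as the étale quotient of $X_s$, so the separable rank of $X_s$ equals the rank of $X'_s$, which is locally constant because $f'$ is finite étale.

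For the converse I would work affine-locally on $S$, writing $S = \mathrm{Spec}\,R$ and $X = \mathrm{Spec}\,A$ with $A$ a finite locally free $R$-algebra, and first construct the factorization fiberwise. Given a point $s \in S$ with residue field $k = \kappa(s)$, decompose $A \otimes_R k = \prod_i B_i$ into Artinian local $k$-algebras with residue fields $\ell_i$, and let $\ell_i^{sep}$ be the separable closure of $k$ inside $\ell_i$. Since each $B_i$ is Henselian and $\ell_i^{sep}/k$ is étale, there is a unique $k$-algebra embedding $\ell_i^{sep} \hookrightarrow B_i$ lifting the inclusion $\ell_i^{sep} \hookrightarrow \ell_i$. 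Taking products yields an étale $k$-subalgebra $(A \otimes_R k)^{et} := \prod_i \ell_i^{sep} \hookrightarrow A \otimes_R k$ of rank equal to the separable rank of $X_s$, over which $X_s$ is visibly radiciel and surjective.

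The crux is to assemble these fiberwise étale subalgebras into a single étale $R$-subalgebra $A' \subset A$ whose fiber at each $s$ recovers $(A \otimes_R \kappa(s))^{et}$. The hypothesis of locally constant separable rank is precisely what makes this possible, since it guarantees that the candidate subalgebras have constant rank and can therefore fit into a locally free $R$-module. I would carry this out by an inductive lifting argument along infinitesimal thickenings, exploiting the formal smoothness of finite étale $R$-algebras to propagate the étale subalgebra from $R/\mathfrak{m}^n$ to $R/\mathfrak{m}^{n+1}$, then passing to the Henselian or complete case and algebraizing via descent. Étaleness of $R \to A'$ and the radiciel surjectivity of $\mathrm{Spec}\,A \to \mathrm{Spec}\,A'$ are verified fiberwise at the end. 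The main obstacle I anticipate is precisely this spreading-out step: the fieldwise construction rests on Hensel's lemma in a way that does not globalize for free, and the delicate point is to show that the infinitesimal lifts at different closed points are compatible, which is where the locally constant rank hypothesis plays its essential role.
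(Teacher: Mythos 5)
The paper gives no proof of its own here --- it simply cites Messing \cite[\S II.4.8]{Messing} --- so there is no in-paper argument to compare against; I can only judge your sketch on its own terms.

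Your easy direction, the derivation of uniqueness and functoriality from the universal property, and the fiberwise description of the \'etale subalgebra $(A\otimes_R k)^{\mathrm{et}} = \prod_i \ell_i^{\mathrm{sep}}$ are all sound. One small point: the claim that ``$i$ radiciel surjective and $f'$ \'etale'' implies the universal property (initiality among maps to finite \'etale $S$-schemes) is not formal; it uses that a finite radiciel surjective morphism is a universal homeomorphism together with the topological invariance of the \'etale site, so $\mathrm{Hom}_S(X',Y)\to\mathrm{Hom}_S(X,Y)$ is bijective for $Y/S$ \'etale. You should at least name that ingredient before treating uniqueness and functoriality as free.

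The genuine gap is exactly where you flag it: the spreading-out step. You propose to lift the fiberwise \'etale subalgebra along $R/\mathfrak m^n \to R/\mathfrak m^{n+1}$ and then ``algebraize via descent,'' but this program runs into two problems that you do not resolve. First, the theorem has no Noetherian or local hypotheses on $S$; $\mathfrak m$-adic induction plus completion plus Artin approximation, even if it worked, only covers the local Noetherian case, and gluing those local answers into a global $X'/S$ is itself nontrivial. Second, even locally, the formal-smoothness lifting you invoke produces a lift of $A'$ as an abstract \'etale $R/\mathfrak m^{n+1}$-algebra together with a map to $A/\mathfrak m^{n+1}A$, but you need the images to form a \emph{subalgebra} whose fiber at every point (not just the closed one) recovers the fiberwise \'etale part, and you have not shown that the different closed-point constructions agree on overlaps. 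The locally-constant-rank hypothesis is necessary for this, but it is not sufficient by itself to make the infinitesimal lifts patch; you need an actual construction that is canonical (hence automatically compatible). The standard route --- constructing $A'\subset A$ directly as the \'etale subalgebra of idempotent-type elements after an \'etale-local base change reducing to the split case $A\otimes k \cong \prod$ (local Artinian with residue field $k$), and then descending --- sidesteps the compatibility problem entirely because the subalgebra is defined intrinsically rather than assembled from pointwise data. As written, your proof does not close; the hardest step is identified but not carried out.
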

\begin{proof}
\cite[\textsection{II}.4.8]{Messing}.
\end{proof}
\begin{thm}
\label{existence}
Let $S$ be a scheme on which $p$ is locally nilpotent, and $G$ be a $p$-divisible group over $S$. Then the 
following conditions are equivalent.
\begin{enumerate}
\item[\emph{\textbf{(i)}}] $G$ is an extension of an \'etale $p$-divisible group by a connected $p$-divisible group.
\item[\emph{\textbf{(ii)}}] The function $s \longmapsto$ (separable rank of $G[p]_{s}$) is locally constant on $S$.
\end{enumerate}
\end{thm}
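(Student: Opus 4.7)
The plan is to reduce the existence of the connected-\'etale sequence for $G$ to the connected-\'etale decomposition of each finite locally free piece $G_n := G[p^n]$ via Theorem \ref{uniqueness}, and then to assemble the resulting factorizations into a short exact sequence of $p$-divisible groups.

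The direction (i) $\Rightarrow$ (ii) is the easier one. If $G$ is an extension of an \'etale $p$-divisible group $G^{et}$ by a connected $p$-divisible group $G^0$, then at each $s \in S$ the fiber $G^0[p]_s$ is infinitesimal over $\kappa(s)$ and so has trivial separable part, while $G^{et}[p]_s$ is finite \'etale of rank $p^{h^{et}(s)}$, where $h^{et}(s)$ is the height of $G^{et}$ at $s$. Since the height of a $p$-divisible group is locally constant on its base, the separable rank of $G[p]_s$ equals $p^{h^{et}(s)}$ and is locally constant.

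For (ii) $\Rightarrow$ (i) I would first upgrade the hypothesis from $G[p]$ to every $G_n$. At each $s \in S$ the fiber $G_s$ is a $p$-divisible group over a field and, after base change to the perfect closure of $\kappa(s)$, splits canonically as $G_s^0 \times G_s^{et}$ (cf.\ Remark \ref{perfectbases}); in particular the separable rank of $G[p^n]_s$ is the $n$-th power of that of $G[p]_s$, so local constancy propagates from level $1$ to all $n$. Applying Theorem \ref{uniqueness} to each $G_n$ yields functorial factorizations $G_n \longrightarrow G_n^{et} \longrightarrow S$ with the first map radiciel surjective and the second finite \'etale. Set $G_n^0 := \ker(G_n \to G_n^{et})$. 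Functoriality transports the transition maps $i_n: G_n \hookrightarrow G_{n+1}$ and multiplication by $p$ to both the \'etale quotients and the infinitesimal kernels, producing candidate inverse systems $(G_n^{et}, i_n^{et})$ and $(G_n^0, i_n^0)$.

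The main obstacle is then verifying that $(G_n^{et})$ is an \'etale $p$-divisible group, $(G_n^0)$ is a connected one, and the level-$n$ sequences $0 \to G_n^0 \to G_n \to G_n^{et} \to 0$ assemble into a short exact sequence of $p$-divisible groups. Flatness and the correct rank of $G_n^0$ must be checked fiberwise using the splitting over $\kappa(s)$, which also shows that the $p^n$-torsion of $\varinjlim_m G_m^{et}$ is exactly $G_n^{et}$ (and analogously for the infinitesimal parts, so that the axioms $\ker(p^n: G_{n+1}^{\bullet} \to G_{n+1}^{\bullet}) = G_n^{\bullet}$ are satisfied). Connectedness of $G^0 := \varinjlim G_n^0$ reduces to the fiberwise statement: each $G_n^0$ has infinitesimal fibers because, over each $s$, the radiciel map $G_n \to G_n^{et}$ kills precisely the connected part of $G_{n,s}$. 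Once these compatibilities are established, the short exact sequences at each finite level assemble into the desired short exact sequence of $p$-divisible groups, yielding (i).
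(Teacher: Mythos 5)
The paper supplies no proof here: it simply quotes the statement and cites Messing, \textsection{II}.4.9. Your proposal reconstructs what is essentially Messing's own argument, namely applying the finite-level factorization of Theorem \ref{uniqueness} to each $G[p^n]$, invoking functoriality and uniqueness to endow the factorizations with group scheme structure and compatibility with the transition maps $i_n$ and with multiplication by $p$, and then verifying the $p$-divisible group axioms level by level; the direction (i)~$\Rightarrow$~(ii) is also handled correctly. So the route you take is the intended one.

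One inaccuracy is worth flagging. You write that flatness of $G_n^0 := \ker(G_n \to G_n^{et})$ ``must be checked fiberwise using the splitting over $\kappa(s)$.'' Flatness is not a fiberwise condition (every module over a field is flat, so the fibers carry no information) and cannot be established that way. What actually makes $G_n^0$ finite locally free is structural: Theorem \ref{uniqueness} already asserts that $i\colon G_n \to G_n^{et}$ is finite and locally free, and $G_n^0$ is the base change of $i$ along the identity section $S \to G_n^{et}$, hence finite locally free over $S$ with no fiberwise argument needed. The role of the fiberwise splitting over the perfect closure of $\kappa(s)$ is elsewhere: it gives the rank count, the infinitesimality of the fibers of $G_n^0$, and the identification of $G_n^{et}$ (resp.\ $G_n^0$) with $\ker\bigl(p^n\colon G_{n+1}^{et}\to G_{n+1}^{et}\bigr)$ (resp.\ the analogous kernel for the connected parts), since these comparisons are morphisms of finite locally free $S$-schemes which may legitimately be tested on fibers. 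A minor slip: the systems $(G_n^{et},i_n^{et})$ and $(G_n^0,i_n^0)$ are inductive, not inverse, systems. With these corrections the proof stands.
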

\begin{proof}
We only take the relevant parts of \cite[\textsection{II}.4.9]{Messing}.
\end{proof}
 
\begin{prop}
\label{splitting}
Let $E[p^{n}]$ denote the kernel of $p^n:E \longrightarrow E$ and $E[p^{\infty}]$ be the $p$-divisible group of $E$. 
\begin{itemize} 
 \item[\textbf{\emph{(i)}}] For each $n$ there is a unique connected- \'etale sequence
\begin{eqnarray}
\label{connectedetaletorsion}
0 \longrightarrow E[p^n]^{0} \longrightarrow E[p^n] \longrightarrow E[p^n]^{et} \longrightarrow 0 \nonumber
\end{eqnarray}
which splits over $R_n$.
\item[\textbf{\emph{(ii)}}] There is a unique connected- \'etale sequence of $p$-divisible groups 
\begin{eqnarray}
\label{connectedetaleinfty}
0 \longrightarrow E[p^{\infty}]^{0} \longrightarrow E[p^{\infty}] \longrightarrow E[p^{\infty}]^{et} \longrightarrow 0, \nonumber 
\end{eqnarray}
which splits over $R'$.
\end{itemize}
\end{prop}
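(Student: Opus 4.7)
The proof falls into two parts. In both, the existence and uniqueness of the connected-\'etale sequence are essentially immediate from the Messing theorems quoted above: since $E/R$ is ordinary, the separable rank of $E[p^n]_s$ is constant equal to $p^n$ at every point $s \in \mathrm{Spec}\,R$, which is precisely the locally-constant rank hypothesis of Theorems \ref{uniqueness} and \ref{existence}. The real content is to exhibit a splitting after the appropriate base change, and this is where the rings $R_n$ and $R'$ enter.

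For (i), the key observation is that $E_{R_n}$ is canonically the $n$-fold Frobenius twist of $E_n$ over $R_n$. Indeed, $(E_n)^{(p^n)}$ is the base change of $E_n = E \otimes_{R,\sigma_n} R_n$ along the $n$-fold Frobenius $F_{R_n}^n$, and one verifies by unwinding that $F_{R_n}^n \circ \sigma_n : R \to R_n$ is simply the natural inclusion, giving a canonical identification $(E_n)^{(p^n)} \cong E_{R_n}$ over $R_n$. Now the $n$-fold Verschiebung $V_{E_n}^n : E_{R_n} \to E_n$ is an \'etale isogeny of degree $p^n$ because $E_n/R_n$ is ordinary, and the identity $F_{E_n}^n \circ V_{E_n}^n = p^n$ on $E_{R_n}$ forces $\ker V_{E_n}^n \subset E_{R_n}[p^n]$. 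This kernel is \'etale of rank $p^n$, and it has trivial intersection with the connected subgroup $E_{R_n}[p^n]^0 = \ker F_{E_{R_n}}^n$. Comparing ranks, the composition $\ker V_{E_n}^n \hookrightarrow E_{R_n}[p^n] \twoheadrightarrow E_{R_n}[p^n]^{et}$ is an isomorphism of \'etale group schemes over $R_n$, so the inclusion $\ker V_{E_n}^n \hookrightarrow E_{R_n}[p^n]$ is the desired section.

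For (ii), the splitting over $R'$ is obtained by passing to the direct limit of the splittings constructed in (i). The main step is to verify compatibility: the section of $E[p^{n+1}]$ over $R_{n+1}$ must restrict, on $p^n$-torsion, to the base change of the section of $E[p^n]$ along the inclusion $R_n \hookrightarrow R_{n+1}$. This reduces to comparing $\ker V_{E_{n+1}}^{n+1}$ with $\ker V_{E_n}^n$ under this inclusion, and follows from the naturality of Verschiebung with respect to Frobenius twists of the base, combined with the transition maps between the auxiliary curves $E_n$. Using that $R' = \bigcup_n R_n$, the compatible system of sections glues to a section of the connected-\'etale sequence of $p$-divisible groups over $R'$. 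The compatibility check in (ii) is the most delicate technical point; everything else is either an application of Theorems \ref{uniqueness}--\ref{existence} or a formal consequence of the interplay between Frobenius and Verschiebung on an ordinary elliptic curve.
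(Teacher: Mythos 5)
Your argument follows the same essential strategy as the paper: you construct the section of $E[p^n] \to E[p^n]^{et}$ over $R_n$ from $\ker V^n$, using that ordinariness of $E$ makes $\ker V^n$ finite \'etale, and you reduce to showing $\ker V^n \to E[p^n]^{et}$ is an isomorphism. The single point of divergence is the final step. The paper invokes Grothendieck's equivalence between finite \'etale covers of the connected Noetherian scheme Spec$\,R_n$ and finite $\pi_1$-sets, then checks the map is a bijection on one geometric fiber. You instead assert that $\ker V^n$ has trivial intersection with $E[p^n]^0$ and compare ranks; that works, but the trivial-intersection claim needs a short justification over the base $R_n$ (fiberwise it is immediate, schematically it is not automatic). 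One clean way: the kernel of $\ker V^n \to E[p^n]^{et}$ is a fiber product of finite \'etale $R_n$-schemes, hence finite \'etale of degree one, and so trivial since Spec$\,R_n$ is connected. Once completed, your route is slightly more general in that it avoids the Noetherian hypothesis that the paper needs precisely for the $\pi_1$ step. For (ii), you rightly note that the compatibility of the sections $\ker V^n \hookrightarrow E[p^n]$ across varying $n$ must be verified before passing to the limit over $R' = \bigcup_n R_n$; the paper dispatches this in one sentence, and your more explicit account of the check is a fair refinement.
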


\begin{proof}
By hypothesis $E$ is ordinary, so the $p$-divisible group $G=E[p^{\infty}]$ satisfy the last condition of Theorem \ref{existence}, and the existence of both sequences follow. Recall 
the notation we adopted for $E$, i.e. we can take the base to be $R$, $R_n$ or $R'$ and so we have the relevant connected- \'etale sequences over any of these bases. But by the 
uniqueness assertion of Theorem \ref{uniqueness} these sequences are compatible with each other in the sense that $E_{R'}[p^n]^{0} = (E[p^n]^{0})_{R'}$ and 
similarly for other groups and the other base $R_n$. Thus the uniqueness of the sequences in the theorem also follow. 

  The remaining thing is to prove the splitting of the sequences over the specified bases. First note that the splitting of the sequences given in (i) for all $n$ imply the splitting 
of the sequence given in (ii). Thus we only need to show that 
\[
 0 \longrightarrow E[p^n]^{0} \longrightarrow E[p^n] \longrightarrow E[p^n]^{et} \longrightarrow 0 
\]
splits over $R_n$. Also the groups involved here are all commutative, so 
splitting amounts to giving a section of $E[p^n] \longrightarrow E[p^n]^{et}$ over $R_n$. 

  Let $F^n:\,$Spec$\,R_n \longrightarrow$Spec$\,R_n$ be the $n$-th iterate of the absolute Frobenius of Spec$\,R_n$. Then we have
\begin{eqnarray}
E_n^{(p^n)}:= E_n \otimes_{R_n,F^{n}} R_n = E \otimes_{R,i} R_n (=E). \nonumber 
\end{eqnarray}
To simplify notation we use $F^n$ also 
to denote the $n$-th iterate of the relative Frobenius of $E_n$; $F^n: E_n \longrightarrow E_{n}^{(p^n)}$. We denote the dual isogeny of $F^n$ by $V^n$. 
Then we have the following commutative diagram
\begin{diagram}
E &\rTo^{V^{n}} &E_{n}\\
&\rdTo_{[p^n]} &\dTo_{F^{n}}\\
& & E
\end{diagram}
which shows that ker$(V^n)$ is a subgroup of $E[p^n]$. But since $E$ is ordinary then ker$(V^n)$ is a finite  \'etale group over $R_n$ \cite[\textsection{12}.3.6]{KM}. 
The inclusion ker$(V^n) \hookrightarrow E[p^n]$ will give a required section once we can show that ker$(V^n) \longisomto E[p^n]^{et}$. Note that this isomorphism holds 
over algebraically closed fields; since then $E[p^n] = \mu_{p^n} \times \mathbb{Z}/p^{n}\mathbb{Z}$, and $V^n$ is the identity on $\mu_{p^n}$ and 
kills $\mathbb{Z}/p^{n}\mathbb{Z}$. Our aim is to reduce to this case. In general the composition
\begin{eqnarray}
 \text{ker}(V^n) \longrightarrow E[p^n] \longrightarrow E[p^n]^{et}
\end{eqnarray}
is a group homomorphism, so necessarily commutes with the action of the  \'etale fundamental group (see \cite{Tate1} for the definition of the  \'etale fundamental group). Finally 
we remark that $R_n$ is Noetherian for any $n$. Now the following theorem of Grothendieck completes the proof.
\end{proof}
\begin{thmx}
Let $S$ be a locally Noetherian scheme, $\alpha$ be a geometric point, and $\pi=\pi(S,\alpha)$ be 
the  \'etale fundamental group of $S$ centered at $\alpha$. Then the functor $Y \longmapsto Y(\alpha)$
establishes an equivalence between the category of finite  \'etale schemes over $S$ and the category of finite 
sets with a continuous $\pi$ action.
\end{thmx}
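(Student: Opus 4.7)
The theorem is Grothendieck's fundamental equivalence of étale Galois theory (SGA~1, Exposé~V), and the natural plan is to prove it within the abstract formalism of \emph{Galois categories}. The strategy is threefold: verify that the pair $(\mathrm{FEt}/S,\,F_\alpha)$, where $F_\alpha(Y) := \mathrm{Hom}_S(\alpha, Y)$, is a Galois category with fiber functor; construct the automorphism group $\pi = \mathrm{Aut}(F_\alpha)$ as a profinite group via pro-representability of $F_\alpha$; and then invoke the abstract reconstruction theorem for Galois categories.

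For the axiomatic check, I would verify that $\mathrm{FEt}/S$ admits finite limits (inherited from schemes, using stability of finite étaleness under base change), finite coproducts (disjoint unions), and quotients by finite groups of $S$-automorphisms (effective descent for finite étale covers on locally Noetherian bases). Every morphism in $\mathrm{FEt}/S$ factors as a strict epimorphism onto its image followed by inclusion of a connected component, because finite étale maps are open and closed. The fiber functor $F_\alpha$ then takes finite values, is exact, preserves coproducts and quotients by finite groups, and reflects isomorphisms, since the geometric fibers of a finite étale map are disjoint unions of copies of $\alpha$.

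The decisive step is \emph{pro-representability} of $F_\alpha$. Given connected $Y \in \mathrm{FEt}/S$ with a marked geometric point $y \in F_\alpha(Y)$, one would construct a pointed Galois closure $(P, p) \to (Y, y) \to (S, \alpha)$ by taking the connected component of $Y^{\times_S n}$ through a canonical tuple of geometric points (with $n = |F_\alpha(Y)|$) and verifying that $\mathrm{Aut}_S(P)$ acts simply transitively on $F_\alpha(P)$. The resulting system of pointed connected Galois covers is cofiltered and pro-represents $F_\alpha$, so that $\pi := \mathrm{Aut}(F_\alpha) \cong \varprojlim_i \mathrm{Aut}_S(P_i)^{op}$ is naturally profinite and acts continuously on every $F_\alpha(Y)$.

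The equivalence is then formal. Full faithfulness reduces, for connected $Y$, to the fact that a $\pi$-equivariant map $F_\alpha(Y) \to F_\alpha(Y')$ is determined by its value at a single element, which by pro-representability corresponds to a unique $S$-morphism $Y \to Y'$; the general case follows by decomposing into connected components. Essential surjectivity is obtained by realizing any finite continuous $\pi$-set $M$, which necessarily factors through some finite quotient $\mathrm{Aut}_S(P_i)$, as the fiber of the quotient $P_i / \mathrm{stab}(m) \to S$ for a chosen $m \in M$. The main obstacle is the pro-representability step: constructing honest Galois closures within $\mathrm{FEt}/S$ demands delicate bookkeeping of connected components and geometric base points, and the locally Noetherian hypothesis on $S$ is what ensures that every intermediate object remains finite and étale.
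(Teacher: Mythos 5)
The paper does not prove this theorem at all: it quotes it as a black-box result of Grothendieck (SGA~1, Exp.~V; see also Tate's article cited as \cite{Tate1}) to conclude the proof of Proposition~\ref{splitting}, and there is therefore nothing in the paper to compare your argument against. Your sketch is the standard Galois-category proof and is correct in outline: verify the axioms for $(\mathrm{FEt}/S, F_\alpha)$, establish pro-representability by constructing pointed Galois closures via the connected component of $Y^{\times_S n}$ through the tuple enumerating $F_\alpha(Y)$, define $\pi$ as the resulting profinite limit of finite automorphism groups, and then derive the equivalence formally. Two small remarks. First, the statement (both as you treat it and as the paper quotes it) tacitly requires $S$ to be \emph{connected}; otherwise $\pi(S,\alpha)$ does not control all of $\mathrm{FEt}/S$ and one needs the fundamental groupoid instead. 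In the paper's application $S=\mathrm{Spec}\,R_n$ with $R_n$ an integral domain, so connectedness holds. Second, your closing claim that the locally Noetherian hypothesis is what keeps intermediate objects finite and \'etale overstates its role: finiteness and \'etaleness are stable under fibre products, closed-open decompositions, and finite quotients over any base, and in fact modern treatments (e.g., the Stacks Project) prove the equivalence for an arbitrary connected scheme. The Noetherian hypothesis in SGA~1 is a matter of historical convenience for the surrounding descent and constructibility machinery, not a logical necessity for the Galois-closure construction itself.
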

\begin{rmk}
\emph{We need Noetherian hypothesis only in the last step of the proof, i.e. only to use this theorem of Grothendieck. So the sequences in (i) and (ii) still exist if we drop the Noetherian 
condition on $R$.}
\end{rmk}
\begin{rmk}
\emph{The sequence given in (ii) of Proposition \ref{splitting} also captures the connected- \'etale sequence of the fibre $\kappa(s')$ for any $s' \in \,$Spec$\,R'$ in the 
following sense;}
\begin{eqnarray}
(E[p^{\infty}]^{et})_{s'}  = (E_{s'}[p^{\infty}])^{et} \text{  }\emph{  and  } \text{  } (E[p^{\infty}]^{0})_{s'}=(E_{'s}[p^{\infty}])^{0}. \nonumber
\end{eqnarray}
\end{rmk}

  Now we will use Proposition \ref{splitting}, and the general Serre-Tate theorem
to find a good lifting of $E(=E_{R'})$ to $W_{m}(R')$ for each $m$. We 
will need the following important theorem of Grothendieck.
\begin{thm}
\label{grot2}
Let $A$ be a ring, $I$ an ideal of $A$. Suppose that $A$ is complete and separated with respect to topology 
defined by the ideal $I$. Put $A_{0}=A/I$. Then the functor
\[
X \longmapsto X \otimes_{A} A_0 
\]
establishes an equivalence between the category of finite  \'etale $A$-schemes and the category of finite  \'etale
$A_0$-schemes.
\end{thm}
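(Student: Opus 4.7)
The plan is to prove the statement in two stages: first reduce to the case where $I$ is nilpotent using the $I$-adic completeness of $A$, and then prove the nilpotent case by induction on the nilpotence index, ultimately reducing to the square-zero case where deformation theory of étale schemes is particularly clean.

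First I would handle full faithfulness, which is essentially formal. The key property of étale morphisms is that they are formally étale: if $B$ is étale over $A$ and $C$ is any $A$-algebra with a nilpotent ideal $J \subset C$, then every $A$-algebra map $B \to C/J$ lifts uniquely to an $A$-algebra map $B \to C$. Applying this to $C=A$ and $J=I$ with $I$ nilpotent, one obtains, for finite étale $A$-schemes $X,Y$, a bijection between $A$-morphisms $X \to Y$ and $A_0$-morphisms $X_0 \to Y_0$. In the $I$-adically complete case one writes $\mathrm{Hom}(X_0,Y_0) = \varprojlim_n \mathrm{Hom}(X_n,Y_n)$ where $X_n,Y_n$ are the reductions mod $I^{n+1}$ (once essential surjectivity over each $A/I^{n+1}$ is known) and passes to the limit.

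For essential surjectivity when $I$ is square-zero, I would argue locally. Any finite étale $A_0$-algebra is, Zariski-locally on $\mathrm{Spec}\,A_0$, of the form $A_0[T]/(f)$ with $f$ monic and $f'$ a unit modulo $(f)$; lifting $f$ to any monic $\tilde f \in A[T]$ of the same degree produces $\tilde B = A[T]/(\tilde f)$, which is finite free and étale over $A$ by the Jacobian criterion. The uniqueness of lifts of morphisms forces these local liftings to agree canonically on overlaps, so they glue to a global finite étale lifting of $X_0$. Induction on the nilpotence index of $I$, breaking each step into a square-zero extension, then handles the general nilpotent case.

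Finally, for the complete case I would algebraize the resulting inverse system: given compatible finite étale liftings $X_n = \mathrm{Spec}(B_n)$ over $A_n := A/I^{n+1}$, set $B := \varprojlim B_n$. Since each $B_n$ is $A_n$-free of the same locally constant rank $r$ and the transition maps $B_{n+1} \to B_n$ are surjective, $B$ is $A$-free of rank $r$ over $A = \varprojlim A_n$, and étaleness descends from the $B_n$ via the infinitesimal lifting criterion applied to test $A$-algebras with nilpotent ideals. I expect the main obstacle to be precisely this algebraization step, together with the coherent gluing of the local étale liftings in the square-zero stage; both hinge on the uniqueness half of the formal étale lifting property, which therefore does the bulk of the work.
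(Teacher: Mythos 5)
The paper does not prove this theorem; it simply cites EGA~IV, 18.3.2, so there is no argument in the text to compare against. Your sketch reconstructs the standard proof underlying that reference, and the overall plan is sound: reduce to nilpotent thickenings, deform finite \'etale algebras across square-zero extensions by locally lifting monic standard presentations and gluing via uniqueness of infinitesimal lifts, then algebraize the compatible tower by passing to the inverse limit. Two remarks. First, in the full-faithfulness step you say to apply formal \'etaleness with $C=A$ and $J=I$; that only controls $A$-points $B\to A$. To compare $\mathrm{Hom}_A(X,Y)$ with $\mathrm{Hom}_{A_0}(X_0,Y_0)$ you should instead take $C$ to be the $A$-algebra $B'$ of $Y$ and $J=IB'$, i.e., lift $A$-algebra maps $B\to B'/IB'$ to maps $B\to B'$ using that $B$ is formally \'etale over $A$; your conclusion is correct, but the instance of the lifting property you invoke does not give it. Second, the algebraization step is the genuinely delicate part if $A$ is not assumed Noetherian (the clean modern formulation is that $(A,I)$ is a henselian pair, for which the equivalence of finite \'etale sites is standard); for the use made of the theorem in this paper, where $A=W_m(R')$ and $I=\ker(W_m(R')\to R')$ is nilpotent, the inverse-limit step is not actually needed, so your square-zero induction already covers what the paper requires.
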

\begin{proof}
\cite[\textsection{18}.3.2]{EGA4}.
\end{proof}
\begin{thm}
\label{mainthmB}
Let $R$ be a Noetherian, integral $F$-algebra with perfect closure $R'$, and $E$ be an ordinary elliptic curve over
$R$. Then for each $m$ there exists a unique elliptic curve $\mathbb{E}_m/W_m(R')$ lifting $E/R'$
such that the $p$-divisible group $\mathbb{E}_m[p^{\infty}]$ has a split exact connected-\'etale sequence. Moreover for any $s'\in \,$\emph{Spec}$\,R'$ with residue field $\kappa(s')$, 
the elliptic curve
\[
 \mathbb{E}_m \otimes_{W_m(R')} W_m(\kappa(s'))
\]
is the canonical lifting of $E_{s'}$ over $W_m(\kappa(s'))$.
\end{thm}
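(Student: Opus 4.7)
The plan is to reduce the construction to the general Serre-Tate theorem. Set $A=W_m(R')$, $I=pA$ (nilpotent since $p^m=0$), $A_0=R'$; to produce a lift $\mathbb{E}_m/A$ of $E/R'$ it suffices, by the general Serre-Tate theorem, to produce a $p$-divisible group $L$ over $A$ together with an isomorphism $\epsilon:L\otimes_A A_0\isomto E[p^\infty]$. I would build $L$ as an honest direct sum of a canonical connected lift and a canonical \'etale lift of the two halves of the split sequence supplied by Proposition \ref{splitting}(ii), so that the split connected-\'etale sequence is built in by construction.

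For the \'etale half, $E[p^\infty]^{et}$ is an \'etale $p$-divisible group over $R'$, and applying Grothendieck's Theorem \ref{grot2} level by level (with the nilpotent ideal $(p)\subset W_m(R')$) yields a unique \'etale $p$-divisible group $L^{et}/W_m(R')$ lifting $E[p^\infty]^{et}$. For the connected half, since $E$ is ordinary the group $E[p^\infty]^0$ is of multiplicative type, so its Cartier dual is \'etale, lifts uniquely by the same theorem, and Cartier dualizing back gives a canonical connected lift $L^0/W_m(R')$ of $E[p^\infty]^0$. Setting $L:=L^0\oplus L^{et}$ with the evident $\epsilon$ and feeding the triple $(E,L,\epsilon)$ into the general Serre-Tate theorem produces an abelian scheme $\mathbb{E}_m/W_m(R')$ lifting $E/R'$; since the relative dimension is $1$, $\mathbb{E}_m$ is an elliptic curve, and $\mathbb{E}_m[p^\infty]\cong L$ has split connected-\'etale sequence by construction.

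For uniqueness, suppose $\mathbb{E}_m'/W_m(R')$ is another lift with the stated property. Its $p$-divisible group decomposes as a direct sum of an \'etale $p$-divisible group $L^{et\prime}$ and a connected $p$-divisible group $L^{0\prime}$, each lifting the corresponding factor of $E[p^\infty]$. Grothendieck's theorem forces $L^{et\prime}\cong L^{et}$, and Cartier duality plus the same theorem forces $L^{0\prime}\cong L^0$; the Serre-Tate equivalence then identifies $\mathbb{E}_m'\cong\mathbb{E}_m$.

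For the fibral assertion, fix $s'\in \text{Spec}\,R'$; by the observation recalled at the start of Section 3, $\kappa(s')$ is perfect, so the notion of canonical lifting over $W_m(\kappa(s'))$ applies. The formation of the connected-\'etale sequence (Theorems \ref{uniqueness} and \ref{existence}), of Grothendieck's lift, and of Cartier duality are all compatible with the base change $W_m(R')\to W_m(\kappa(s'))$, so $\mathbb{E}_m\otimes_{W_m(R')} W_m(\kappa(s'))$ again has a split connected-\'etale $p$-divisible group. Over a perfect residue field the Serre-Tate parametrization recalled in Section 2 identifies liftings of $E_{s'}$ with elements of an Ext-group of $p$-divisible groups, in which the class of the split extension is the identity; by definition this identity corresponds to the canonical lifting, so the base change is canonical. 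The main technical nuisance is ensuring that the three \emph{canonicity} inputs (Grothendieck's equivalence, Cartier duality, and the splitting of the connected-\'etale sequence supplied by Proposition \ref{splitting}) all interact functorially with base change; once this is in place, existence, uniqueness, and the fibral identification fall out together.
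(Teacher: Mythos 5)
Your proposal is correct and follows essentially the same route as the paper: lift the \'etale piece $E[p^\infty]^{et}$ by Grothendieck's Theorem \ref{grot2}, lift the connected piece by applying the same theorem to its Cartier dual, take the product (the paper's $G_\infty\times H_\infty$, your $L^0\oplus L^{et}$), and feed this split $p$-divisible group into the general Serre-Tate theorem, then observe that the construction commutes with base change to each $W_m(\kappa(s'))$ and that a lift with split connected-\'etale sequence over a perfect residue field is by definition the canonical lifting. Your explicit spelling-out of the uniqueness step (that any competing lift with split connected-\'etale sequence has \'etale and connected factors forced to agree with $L^{et}$ and $L^0$ by Grothendieck's equivalence and Cartier duality) is a welcome elaboration of a point the paper leaves implicit, but it is not a different argument.
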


\begin{proof}
Let $m \geq 2$ be a fixed integer. Since $R'$ is a perfect ring, $A=W_m(R')$ satisfies the hypothesis of Theorem \ref{grot2}. 
So for any $n$ there exists a unique \'etale group scheme $H_n$ over $W_{m}(R')$ such that
\[
H_n \otimes_{W_{m}(R')} R' \longisomto E[p^n]^{et}.
\]
It also follows that $H_n$ form an inductive system, and so the limit gives a $p$-divisible group $H_{\infty}$ lifting 
$E[p^{\infty}]^{et}$.
Applying Cartier duality to $E[p^{n}]^{et}$, we see that $E[p^{n}]^{0}$ and so $E[p^{\infty}]^{0}$ has also a unique lifting $G_{\infty}$ to 
$W_{m}(R')$. Since the sequence given in (ii) of Proposition \ref{splitting} is split exact, the product $G_{\infty} \times H_{\infty}$ lifts the 
$p$-divisible group $E[p^{\infty}]$. 

  By the general Serre-Tate theorem there is a unique abelian scheme $\mathbb{E}_m$ over $W_m(R')$ lifting $E$ which corresponds to $G_{\infty} \times H_{\infty}$, 
and so has a split exact connected- \'etale sequence
\begin{eqnarray}
0 \longrightarrow \mathbb{E}_{m}[p^{\infty}]^{0} \longrightarrow \mathbb{E}_{m}[p^{\infty}] \longrightarrow \mathbb{E}_{m}[p^{\infty}]^{0} \longrightarrow 0. \nonumber 
\end{eqnarray}
By checking fibers or dimension we can see that $\mathbb{E}_m$ is indeed an elliptic curve. 

  Now by construction for any $s' \in \,$Spec$\,R'$, the elliptic curve $\mathbb{E}_m \otimes_{W_{m}(R')} W_{m}(\kappa(s'))$ lifts $E_{s'}$ and the associated $p$-divisible group 
\[
 \mathbb{E}_{m}[p^{\infty}] \otimes_{W_{m}(R')} W_{m}(\kappa(s'))
\]
has a split exact connected- \'etale sequence. So $\mathbb{E}_m \otimes_{W_{m}(R')} W_{m}(\kappa(s'))$ must be the canonical lifting of $E_{s'}$.
\end{proof}

  For the purposes of this paper we may call the elliptic curve $\mathbb{E}_m$ as the canonical lifting of $E$ over $W_m(R')$. The
$j$-invariant of $\mathbb{E}_m$, denoted by $j(\mathbb{E}_m)$ will be the universal formula for the canonical lifting of the fibers 
in the following sense: Let 
\[
 j(\mathbb{E}_m)=(j_0,j_1,...,j_{m-1})
\]
and let $f_{s'}:\, R' \longrightarrow \kappa(s')$ be the canonical map for $s'\in \,$Spec$\,R'$. Then the $j$-invariant of the canonical lifting of $E_{s'}$ over $W_m(\kappa(s'))$
is given by
\[
 j=(f_{s'}(j_0),f_s(j_1),...,f_s(j_{m-1})).
\]

\section{Canonical lifting over imperfect fields }
  In this section we will prove that the base of the canonical lifting has a well behavior with respect to the base of 
the given ordinary elliptic curve. Explicitly we will prove the following theorem.
\begin{thm}
\label{descent_to_sep}
Let $K$ be any field of characteristic $p > 0$, and let $E$ be an ordinary elliptic curve over $K$. Let $\mathbb{E}$ be the canonical lifting of $E$ over $W(\bar{K})$.
We denote the $j$-invariant of $\mathbb{E}$ by
$j(\mathbb{E})=(j_0,j_1,...,j_n,...)$. Then each $j_n$ is an element of $K$.
\end{thm}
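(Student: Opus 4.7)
It suffices to show, for each fixed $m \geq 1$, that the truncation $(j_0,\ldots,j_{m-1}) \in W_m(\bar{K})$ of $j(\mathbb{E})$ modulo $p^m$ already lies in the subring $W_m(K)$, since the canonical lifting is compatible with reduction mod $p^m$ and the Witt coordinates respect truncation. Taking $m \to \infty$ yields $j_n \in K$ for every $n$.

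For the first descent (to the perfect closure), I apply Theorem \ref{mainthmB} directly to the Noetherian integral $F$-algebra $R = K$, whose perfect closure is $R' = K^{\text{perf}}$. This produces an elliptic curve $\mathbb{E}_m / W_m(K^{\text{perf}})$ lifting $E_{K^{\text{perf}}}$ whose $p$-divisible group has a split exact connected-\'etale sequence. The fiberwise clause of Theorem \ref{mainthmB}, combined with the compatibility of the classical canonical lifting under the further base change $W_m(K^{\text{perf}}) \to W_m(\bar{K})$ of perfect fields, identifies $\mathbb{E}_m \otimes_{W_m(K^{\text{perf}})} W_m(\bar{K})$ with the mod-$p^m$ truncation of $\mathbb{E}$. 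Hence $(j_0,\ldots,j_{m-1}) \in W_m(K^{\text{perf}})$, so each $j_i \in K^{\text{perf}}$.

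The second descent is from $K^{\text{perf}}$ down to $K$, and is the main obstacle because $K^{\text{perf}}/K$ is purely inseparable, so Galois descent is unavailable. Instead I would use fppf descent via Kummer theory. Over $W_m(K)$, Grothendieck's Theorem \ref{grot2} applied to the Henselian pair $(W_m(K), (p))$ gives a unique lift $\tilde{H}$ of the \'etale part $E[p^\infty]^{\text{et}}$, and Cartier duality gives a unique lift $\tilde{G}$ of the connected part $E[p^\infty]^0$. Lifts of $E$ to $W_m(K)$ are then a torsor (a priori possibly empty) under $\mathrm{Ext}^1_{W_m(K)}(\tilde{H},\tilde{G})$; via the fppf Kummer sequence $0 \to \mu_{p^n} \to \mathbb{G}_m \to \mathbb{G}_m \to 0$ this Ext group is computable in terms of units in $W_m(K)$ modulo $p^n$-th powers. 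The canonical lifting $\mathbb{E}_m$ over $W_m(K^{\text{perf}})$ corresponds to the Serre-Tate identity of the analogous torsor over $W_m(K^{\text{perf}})$; the hard step is to match this identity, via the fppf-Kummer description, with a distinguished lift $\mathbb{E}_m'/W_m(K)$ whose base change to $W_m(K^{\text{perf}})$ is $\mathbb{E}_m$. Once this is established, $j(\mathbb{E}_m) = j(\mathbb{E}_m') \in W_m(K)$, completing the proof. The principal subtlety is that $E[p^\infty]$ need not have a split connected-\'etale sequence over $K$ itself (it splits only after going up to $K^{\text{perf}}$), so the identification of the Serre-Tate parameter over $K^{\text{perf}}$ with a well-defined fppf-Kummer class over the Artinian ring $W_m(K)$ requires genuine care.
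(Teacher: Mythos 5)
Your proposal correctly identifies the first step (descent from $\bar K$ to $K^{\mathrm{perf}}$ is automatic since $K^{\mathrm{perf}}$ is perfect), and you correctly sense that the real work is the purely inseparable descent $K^{\mathrm{perf}}\to K$ and that fppf Kummer theory should be the tool. But there is a genuine gap: you have skipped the reduction to the case where $K$ is separably closed, and without it the Kummer-theoretic parametrization you invoke does not exist. Over a general $K$, the \'etale part $E[p^{\infty}]^{\mathrm{et}}$ need not be the constant group $\mathbb{Q}_p/\mathbb{Z}_p$ (it is merely a form of it, trivialized only over $K^{\mathrm{sep}}$), and dually $E[p^{\infty}]^0$ need not be $\mu$. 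So the extension $0\to\mu\to E[p^{\infty}]\to\mathbb{Q}_p/\mathbb{Z}_p\to 0$ that the Kummer sequence would parametrize by $\varprojlim K^*/(K^*)^{p^n}$ is simply not available, and the group $\mathrm{Ext}^1_{W_m(K)}(\tilde H,\tilde G)$ is not computable by the naive Kummer argument. You flag this issue (``the identification\dots requires genuine care'') but do not resolve it, and as stated your second paragraph cannot be carried out.

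The paper's fix is cheap but essential: it first observes that $K = K^{\mathrm{perf}}\cap K^{\mathrm{sep}}$ inside $\bar K$, and since $j_n\in K^{\mathrm{perf}}$ is already known (perfect case), one may replace $K$ by $K^{\mathrm{sep}}$ and assume $K$ separably closed with $K^{\mathrm{perf}}=\bar K$. Over a separably closed $K$ the \'etale part \emph{is} constant, $E[p^n]^{\mathrm{et}}\cong\mathbb{Z}/p^n\mathbb{Z}$ and $E[p^n]^0\cong\mu_{p^n}$, so the extension $0\to\mu\to E[p^{\infty}]\to\mathbb{Q}_p/\mathbb{Z}_p\to 0$ genuinely exists over $K$ and is classified (via the Katz--Mazur group $T[p^n]$ and $H^1(\mathrm{Spec}\,K,\mu_{p^n})\cong K^*/(K^*)^{p^n}$, using $\mathrm{Pic}(K)=0$) by a compatible sequence $(q_n)\in\varprojlim K^*/(K^*)^{p^n}$. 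The lift over $W_m(K)$ is then produced completely explicitly by taking Teichm\"uller representatives $Q_n=(q_n,0,\dots,0)$, and the point is that $Q_n$ acquires a $p^n$-th root in $W_m(K')$ precisely because $K'$ is perfect, so the lifted connected--\'etale sequence splits over $W_m(K')$ and the resulting abelian scheme is the canonical lifting by the general Serre--Tate theorem. This explicit Teichm\"uller construction also replaces your ``hard matching step'': rather than trying to recognize which lift over $W_m(K)$ base-changes to the Serre--Tate identity, one simply builds a lift over $W_m(K)$ with the characterizing split property over $W_m(K')$. To repair your argument you would need to add both the separably-closed reduction and the explicit Teichm\"uller lift; as written the proposal does not close.
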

\begin{proof}
 This theorem for $p \geq 5$ was proved by Finotti, L.R.A. in \cite{finotti} using Greenberg transforms and elliptic Teichm\"{u}ller lifts. Here we give a different proof. 
The theorem holds for perfect $K$ by definition of the canonical lifting. Let $K'$ and $K^{\text{sep}}$ denote the perfect and separable closures of $K$ respectively. Then we have 
$K' \cap K^s =K$. Since $j_n \in K'$ it suffices to show that $j_n \in K^{\text{sep}}$. Thus we may assume $K$ to be a separably closed field and $K'=\bar{K}$. 
In this case for any integer $n \geq 1$ we have the following isomorphisms over $K$,
\begin{eqnarray}
 E[p^{n}]^{0} &\longisomto& \mu_{p^{n}}, \nonumber \\
 E[p^{n}]^{et} &\longisomto& \mathbb{Z}/p^{n}\mathbb{Z} \nonumber
\end{eqnarray}
where $\mu_{p^{n}} = $ker$(p^{n}: \mathbb{G}_m \longrightarrow \mathbb{G}_m)$ and $\mathbb{Z}/p^{n}\mathbb{Z}$ is the Cartier dual of $\mu_{p^{n}}$. We may fix these isomorphisms 
to be compatible with Cartier duality. Then we have the exact sequence
\begin{eqnarray}
  0 \longrightarrow \mu \longrightarrow E[p^{\infty}] \longrightarrow \mathbb{Q}_p/\mathbb{Z}_p \longrightarrow 0 \nonumber
\end{eqnarray}
where $\mu$ and $\mathbb{Q}_p/\mathbb{Z}_p$ denote the p-divisible groups $(\mu_{p^{n}},i_n)$ and $(\mathbb{Z}/p^{n}\mathbb{Z},i_n)$ respectively.
By the general Serre-Tate theorem it suffices to show that there exists a $p$-divisible group $\mathbb{G}/W_m(K)$ lifting $E[p^{\infty}]$ given with an extension 
\begin{eqnarray}
  0 \longrightarrow \mu \longrightarrow \mathbb{G} \longrightarrow \mathbb{Q}_p/\mathbb{Z}_p \longrightarrow 0 \nonumber
\end{eqnarray}
which splits after base change to $W_m(K')$. So the result follows from the following more general theorem. 
\end{proof}

\begin{thm}
\label{mainimperfect}
Let $K$ be any field of characteristic $p > 0$ and $G=(G_n,i_{n})$ be a $p$-divisible group over $K$ given with an extension
\begin{eqnarray}
\label{overK}
  0 \longrightarrow \mu \longrightarrow G \longrightarrow \mathbb{Q}_p/\mathbb{Z}_p \longrightarrow 0.
\end{eqnarray}
Then there exists a $p$-divisible group $\mathbb{G}/W_m(K)$ lifting $G$ with an extension
\begin{eqnarray}
  0 \longrightarrow \mu \longrightarrow \mathbb{G} \longrightarrow \mathbb{Q}_p/\mathbb{Z}_p \longrightarrow 0 \nonumber.
\end{eqnarray}
which splits over $W_m(K')$. 
\end{thm}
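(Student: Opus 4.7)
My plan is to classify the extensions in (\ref{overK}) via fppf-Kummer theory and then produce the required lift by applying the Teichm\"uller character termwise.

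First, I would set up the Kummer-theoretic description. Taking $p^n$-torsion of an extension of $\mathbb{Q}_p/\mathbb{Z}_p$ by $\mu$ yields an extension of finite flat group schemes $0 \to \mu_{p^n} \to G_n \to \mathbb{Z}/p^n\mathbb{Z} \to 0$. Since $\mu_{p^n}(K) = \{1\}$ in characteristic $p$, the fppf Kummer sequence $1 \to \mu_{p^n} \to \mathbb{G}_m \xrightarrow{p^n} \mathbb{G}_m \to 1$ combined with the standard long-exact-sequence computation identifies $\text{Ext}^1(\mathbb{Z}/p^n\mathbb{Z}, \mu_{p^n})$ over $K$ with the Kummer quotient $K^\times/(K^\times)^{p^n}$. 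Passing to the inverse limit along the transition maps coming from the inclusions $G_n \hookrightarrow G_{n+1}$ (which correspond to the natural reductions $K^\times/(K^\times)^{p^{n+1}} \to K^\times/(K^\times)^{p^n}$) attaches to $G$ a coherent sequence $(\bar a_n)_n \in \varprojlim_n K^\times/(K^\times)^{p^n}$. The same formalism applies over $W_m(K)$: a coherent sequence in $\varprojlim_n W_m(K)^\times/(W_m(K)^\times)^{p^n}$ produces an extension of $\mathbb{Q}_p/\mathbb{Z}_p$ by $\mu$ over $W_m(K)$, and conversely every such extension yields such a sequence.

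Next, I would build the lift using the Teichm\"uller character $[\,\cdot\,]\colon K \to W_m(K)$, which is multiplicative and reduces to the identity modulo $p$. Choose any representatives $a_n \in K^\times$ of $\bar a_n$, and set $\bar A_n := [a_n] \pmod{(W_m(K)^\times)^{p^n}}$. Multiplicativity of $[\,\cdot\,]$ yields both well-definedness (a different representative $a_n c^{p^n}$ gives $[a_n][c]^{p^n}$) and coherence of $(\bar A_n)_n$ (writing $a_{n+1} = a_n b^{p^n}$ gives $[a_{n+1}] = [a_n][b]^{p^n}$). The resulting element of $\varprojlim_n W_m(K)^\times/(W_m(K)^\times)^{p^n}$ corresponds, by fppf-Kummer, to a $p$-divisible group extension $0 \to \mu \to \mathbb{G} \to \mathbb{Q}_p/\mathbb{Z}_p \to 0$ over $W_m(K)$, which reduces to (\ref{overK}) modulo $p$ because $[a_n] \bmod p = a_n$.

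Finally, I would verify that the extension splits after base change to $W_m(K')$. Since $K'$ is a perfect field of characteristic $p$, every $a_n \in K^\times \subset (K')^\times$ is a $p^n$-th power: $a_n = c_n^{p^n}$ for some $c_n \in (K')^\times$. Multiplicativity of $[\,\cdot\,]$ then gives $[a_n] = [c_n]^{p^n}$ in $W_m(K')^\times$, so each $\bar A_n$ becomes trivial in $W_m(K')^\times/(W_m(K')^\times)^{p^n}$, and hence the class of $\mathbb{G}_{W_m(K')}$ in the inverse limit vanishes. The step I expect to be most delicate is the fppf-Kummer dictionary itself in the mixed-characteristic setting -- in particular, rigorously justifying that an inverse-limit class in $\varprojlim_n W_m(K)^\times/(W_m(K)^\times)^{p^n}$ really does assemble into a genuine extension of $p$-divisible groups over $W_m(K)$ (the group $\mu_{p^n}(W_m(K))$ is no longer trivial once $n \geq m$, so some care is needed to isolate the "Kummer" piece of $\text{Ext}^1$), and that the entire construction is functorial under base change $W_m(K) \to W_m(K')$. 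Once this dictionary is set up, the remaining verifications reduce to multiplicativity of $[\,\cdot\,]$ and perfectness of $K'$.
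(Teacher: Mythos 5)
Your proposal is essentially the same as the paper's proof: both translate the extension into a coherent sequence in $\varprojlim_n K^*/(K^*)^{p^n}$ via fppf-Kummer theory, lift termwise by the Teichm\"uller map, and use perfectness of $K'$ (every element is a $p^n$-th power) to get the splitting over $W_m(K')$. The one substantive difference is that you work directly with abstract $\text{Ext}^1$ classes, whereas the paper routes everything through the explicit universal group scheme $T[N]$ over $\mathbb{Z}[q,q^{-1}]$ from Katz--Mazur (its Proposition~\ref{KMhelp}), which is precisely what disposes of the delicacy you flagged: the extra $\mu_{p^n}(W_m(K))$-summand of $\text{Ext}^1(\mathbb{Z}/p^n\mathbb{Z},\mu_{p^n})$ (coming from the connecting map in the long exact sequence for $0 \to \mathbb{Z} \to \mathbb{Z} \to \mathbb{Z}/p^n\mathbb{Z} \to 0$) is killed once one imposes that the middle term be annihilated by $p^n$ --- a condition automatic for the $p^n$-torsion of a $p$-divisible group --- and $T[p^n]\otimes_{\mathbb{Z}[q,q^{-1}]}W_m(K)$ is exactly the extension realizing a given Kummer class $q$ with that constraint; so your Kummer dictionary over $W_m(K)$ is valid on the relevant subcategory, and your sketch fills in to the paper's argument.
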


Before we go into the proof we briefly give some facts which is crucial in the proof. Further details of this part can be completely found in \cite[8.7-10]{KM}.
We define a finite locally free group scheme $T[N]$ for any $N >0$ over $\mathbb{Z}[q,q^{-1}]$ as follows. As a scheme $T[N]$ is the disjoint union of 
\[
 T_i[N]=\text{Spec}\,(Z[q,q^{-1}][X]/(X^{N}-q^i))
\]
for $i=0,1,...,N-1$. For any connected $\mathbb{Z}[q,q^{-1}]$-algebra $C$ then we have
\[
 T[N](C)=\{(X,i/N) | X \in C, \, 0 \leq i  \leq N-1,\,X^N=q^i\}
\]
The group law is defined by
\[
(X,i/N).(Y,j/N) = \left\{
	\begin{array}{ll}
		(XY,(i+j)/N)  & \mbox{if } i+j \leq N-1, \\
		(XY/q,(i+j-N)/N) & \mbox{if } i+j \geq N.
	\end{array}
\right.
\]
It is easy to see that $T[N]$ is a finite locally free group scheme of order $N^2$ killed by $N$ and the elements of the form $(X,0)$ is a subgroup isomorphic to $\mu_N$.
So that we have an exact sequence
\[
 0 \longrightarrow \mu_{N} \longrightarrow T[N] \longrightarrow \mathbb{Z}/N\mathbb{Z} \longrightarrow 0.
\]
This exact sequence splits over $C$ if and only if the image of $q$ in $C$ has an $N$-th root. Indeed $T[N]$ is universal in the following sense.
\begin{prop}
\label{KMhelp}
 Let $S$ be any scheme and $G/S$ be a finite locally free group scheme over $S$ of order $N^2$ which is killed by $N$ given with an extension structure
\begin{eqnarray}
\label{basicG}
 0 \longrightarrow \mu_{N} \longrightarrow G \longrightarrow \mathbb{Z}/N\mathbb{Z} \longrightarrow 0.
\end{eqnarray}
Then Zariski locally on $S$ there exists $q\in \mathbb{G}_m(S)$ such that
\[
 G \longisomto T[N] \otimes_{\mathbb{Z}[q,q^{-1}]} S
\]
and this isomorphism is compatible with extension structures.
\end{prop}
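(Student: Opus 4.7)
The plan is to reduce to the affine case, decompose $G$ along the quotient $\mathbb{Z}/N\mathbb{Z}$ into a disjoint union of $\mu_N$-torsors, and then trivialize these torsors Zariski-locally via Kummer theory, after which the group law on $G$ is forced to take the explicit shape used to define $T[N]$.

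First I would replace $S$ by an affine open on which $\mathrm{Pic}$ is trivial. Since $(\mathbb{Z}/N\mathbb{Z})_S = \sqcup_{i=0}^{N-1} S$ and $G \to (\mathbb{Z}/N\mathbb{Z})_S$ is finite locally free of rank $N$, pulling back gives a decomposition of schemes $G = \sqcup_{i=0}^{N-1} G_i$. The extension structure identifies $G_0$ with $\mu_{N,S}$ and makes each $G_i$ a $\mu_N$-torsor over $S$; more importantly, the group law on $G$ restricts to torsor-addition maps $G_i \times_S G_j \to G_{i+j \bmod N}$, so one obtains functorial isomorphisms $G_i \cong G_1^{\wedge i}$ (the $i$-fold contracted product of $\mu_N$-torsors).

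By fppf descent, a $\mu_N$-torsor on $S$ is the same as a pair $(L,\phi)$ with $L$ invertible and $\phi : L^{\otimes N} \isomto \mathcal{O}_S$. After shrinking $S$ Zariski-locally so that $L$ trivializes, a generator $e$ of $L$ determines a unit $q := \phi(e^{\otimes N})^{-1} \in \mathbb{G}_m(S)$ (a different choice of $e$ multiplies $q$ by an $N$-th power of a unit). Then $G_1 = \mathrm{Spec}\,\mathcal{O}_S[X]/(X^N-q) = T_1[N] \otimes_{\mathbb{Z}[q,q^{-1}]} S$, and the identification $G_i \cong G_1^{\wedge i}$ yields $G_i = \mathrm{Spec}\,\mathcal{O}_S[X]/(X^N-q^i) = T_i[N] \otimes_{\mathbb{Z}[q,q^{-1}]} S$. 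Taking the disjoint union over $i$ gives a scheme isomorphism $G \isomto T[N] \otimes_{\mathbb{Z}[q,q^{-1}]} S$.

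The main, though essentially formal, obstacle is checking that this scheme-level isomorphism is compatible with the group and extension structures. Compatibility on $G_0 = \mu_N$ holds by construction and the projection to $\mathbb{Z}/N\mathbb{Z}$ is tautological, so only the multiplication needs verification. For sections $x$ of $G_i$ and $y$ of $G_j$ satisfying $x^N = q^i$ and $y^N = q^j$ in the presentation above, the product $xy$ lies in $G_{i+j \bmod N}$ and $(xy)^N = q^{i+j}$: if $i+j \leq N-1$ this is exactly the defining equation of $G_{i+j}$, while if $i+j \geq N$ the section $xy/q$ of $G_{i+j-N}$ satisfies $(xy/q)^N = q^{i+j-N}$, matching the piecewise formula for the group law on $T[N]$. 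This completes the identification of extensions Zariski-locally on $S$.
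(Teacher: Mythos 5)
Your proof is correct and reaches the same conclusion, but it packages the argument differently from the paper's sketch. The paper (following Katz--Mazur) works cohomologically: fppf-locally the extension splits, so $G$ is an fppf form of $\mu_N \times \mathbb{Z}/N\mathbb{Z}$; the forms respecting the extension structure are classified by $H^1(\mathrm{Spec}\,A, \mathrm{Aut})$ with $\mathrm{Aut} \cong \mu_N$ the unipotent automorphism sheaf; the Kummer sequence together with $\mathrm{Pic}(A)=0$ then produces $q \in A^*$, and the torsor attached to $G$ is identified as $\pi^{-1}(1)$. You instead avoid the non-abelian $H^1$/forms formalism: you decompose $G$ directly as a disjoint union of $\mu_N$-torsors $G_i$ indexed by the points of $\mathbb{Z}/N\mathbb{Z}$, show $G_i \cong G_1^{\wedge i}$ via the group law and contracted products, describe $\mu_N$-torsors concretely as pairs $(L,\phi : L^{\otimes N} \xrightarrow{\sim} \mathcal{O}_S)$, and trivialize $L$ Zariski-locally to extract $q$, after which the compatibility of the scheme isomorphism with the group and extension structure is verified by an explicit computation. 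The pivotal object is the same in both arguments --- the $\mu_N$-torsor $\pi^{-1}(1)$, trivialized via Kummer theory over a base with trivial Picard group --- but your route is more self-contained and geometric, and makes explicit the group-law check that the paper's abstract classification by automorphism torsors absorbs automatically. One small caveat worth being aware of: the description of $\mu_N$-torsors as pairs $(L,\phi)$ is where fppf descent is really doing the work, so that step deserves the citation or justification you implicitly give it.
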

\begin{proof}
 A complete proof can be found in \cite[8.10.5]{KM}. Here we do not give a complete proof. For our purposes we briefly recall the construction of the given isomorphism in the case
$S=$Spec$\,A$ is affine and Pic$(A)=0$. So that we may remove the Zariski local condition on the relevant isomorphism.	

  Now locally fppf (\ref{basicG}) splits. So $G$ is an fppf form of 
the product group scheme $\mu_{p^n} \times \mathbb{Z}/p^{n}\mathbb{Z}$. But the set isomorphism classes of fppf forms of $\mu_{p^n} \times \mathbb{Z}/p^{n}\mathbb{Z}$ is bijective 
to the set of isomorphism classes of Aut-torsors where Aut is the group scheme whose $T$-valued points are
\[
 \text{Aut}(T)=\left\lbrace \text{automorphisms of } \mu_{p^n} \times \mathbb{Z}/p^{n}\mathbb{Z} \text{ of the form } \left( \begin{array}{ccc}
1 & \phi  \\
0 & 1 \end{array} \right), \phi \in \text{Hom}_{T}(\mathbb{Z}/p^{n}\mathbb{Z}, \mu_{p^n}) \right\rbrace
\]
But the later set is just $H^{1}(\text{Spec}\,A,\text{Aut})$. Also Aut$\longisomto \mu_{p^n}$ and so the group $H^{1}(\text{Spec}\,A,\mu_{p^n})$ classifies the isomorphism classes 
of forms of $\mu_{p^n} \times \mathbb{Z}/p^{n}\mathbb{Z}$. Note that the torsor corresponding to $G$ is just the inverse image of 1 in $G \longrightarrow \mathbb{Z}/p^n\mathbb{Z}$.
Now consider the Kummer sequence
\[
 0 \longrightarrow \mu_{p^n} \longrightarrow \mathbb{G}_m \longrightarrow \mathbb{G}_m \longrightarrow 0.
\]
Since $H^{1}(\text{Spec}\,A, \mathbb{G}_m)=$Pic$(A)=0$ we have the following relevant part of the corresponding long exact sequence
\[
 \mathbb{G}_m(A) \xrightarrow{p^n} \mathbb{G}_m(A) \longrightarrow H^{1}(\text{Spec}\,A,\mu_{p^n}) \longrightarrow 0.
\]

  So for any cocycle in $H^{1}(\text{Spec}\,A,\mu_{p^n})$ the corresponding $\mu_{p^n}$-torsor is just $[p^n]^{-1}(q)$ for some $q \in A^{*}$. Note that $q$ is unique up to multiplying
by a $p^n$-th power in $A^{*}$. In particular the class of $G$ in $H^{1}(\text{Spec}\,A,\mu_{p^n})$ denoted by cl$(G)$ corresponds to a $\mu_{p^n}$-torsor 
$[p^n]^{-1}(q)$ for some $q \in A^*$. For any $A$-scheme $T$, the $T$-valued points of $[p^n]^{-1}(q)$ is the set 
$\{x \in \mathbb{G}_m(T): x^{p^n}=q\}$. Now consider the following extension of group schemes
\[
 0 \longrightarrow \mu_{p^n} \longrightarrow T[p^n] \otimes_{\mathbb{Z}[q,q^{-1}]} A \stackrel{\epsilon}{\longrightarrow} \mathbb{Z}/p^{n}\mathbb{Z} \longrightarrow 0.
\]
The $\mu_{p^n}$-torsor corresponding to $T[p^{n}]$ is then $\epsilon^{-1}(1)=[p^n]^{-1}(q)$, so that the images of $G$ and $T[p^n] \otimes_{\mathbb{Z}[q,q^{-1}]} A$ in 
$H^{1}(\text{Spec}\,A,\mu_{p^n})$ are the same and hence
\[
 G_n \longisomto T[p^n] \otimes_{\mathbb{Z}[q,q^{-1}]} A.
\]
\end{proof}

\begin{proof}[Proof of \ref{mainimperfect}]
 First note that any $p$-divisible group $\mathbb{G}$ lifting $G$ necessarily has an extension structure 
\begin{eqnarray}
  0 \longrightarrow \mu \longrightarrow \mathbb{G} \longrightarrow \mathbb{Q}_p/\mathbb{Z}_p \longrightarrow 0\nonumber.
\end{eqnarray}
This follows from Theorem \ref{existence}. So we only need to show the splitting. Now giving an extension as (\ref{overK}) is same as giving a compatible family of extensions
\begin{eqnarray}
\label{overKn}
0 \longrightarrow \mu_{p^n} \longrightarrow G_n \stackrel{\pi}{\longrightarrow} \mathbb{Z}/p^{n}\mathbb{Z} \longrightarrow 0 
\end{eqnarray}
for all $n$. Since $G_n$ satisfies the conditions of Proposition \ref{KMhelp} there exists $q \in K^*$ such that
\[
 G_n \longisomto T[p^{n}] \otimes_{\mathbb{Z}[q,q^{-1}]} K.
\]
By hypothesis we have that 
\[
G_{n-1}=\text{ker}(p^{n-1}:G_n \longrightarrow G_n) \longisomto T[p^{n-1}] \otimes_{\mathbb{Z}[q,q^{-1}]} K.
\]
This shows that the class of $G_{n-1}$ 
in $H^{1}(\text{Spec}\,K,\mu_{p^{n-1}})$ is the $\mu_{p^{n-1}}$-torsor $[p^{n-1}]^{-1}(q)$. So $G = (G_n,i_n)$ determines a non-unique sequence of elements $(q_n)$ in $K^*$ 
where 
$\overline{q_n} \in H^{1}(\text{Spec}\,K,\mu_{p^n}) \longisomto K^*/(K^*)^{p^n}$ and $\overline{q_n}=u_n^{p^{n-1}}\overline{q_{n-1}}$ for some $u_n \in K^*$, i.e. $G$ determines 
an element of the inverse limit
\[
 \varprojlim_{n} K^*/(K^*)^{p^n}.
\]
Conversely given an element $(\overline{q_n})\in \varprojlim_{n} K^*/(K^*)^{p^n}$ choose a sequence $(q_n)$ in $K^*$ such that $q_n \mapsto \overline{q_n}$. Then
we have that $q_n=u_n^{p^{n-1}}q_{n-1}$ and that $G=(T[p^n] \otimes_{\mathbb{Z}[q_n,q_n^{-1}]} A,i_n)_n$ is a $p$-divisible group. 
  
  Up to now we didn't use that $K$ is indeed a field, we only used that Pic$(K)=0$. We can carry out the same procedure to obtain a $p$-divisible group over $W_m(K)$, i.e. we 
need to specify a sequence $(Q_n)$ in $W_m(K)^{*}$ such that $Q_n=U_n^{p^{n-1}}Q_{n-1}$. Then the $p$-divisible group
\[
 \mathbb{G}=(T[p^n] \otimes_{\mathbb{Z}[Q_n,Q_n^{-1}]} W_m(K), i_n)
\]
is the $p$-divisible group corresponding to the chosen sequence $(Q_n)$. Now we impose the condition that $\mathbb{G}$ lifts $G$, i.e. $\mathbb{G}_n \otimes_{W_m(K)} K\longisomto G_n$
via the surjection $W_m(K) \longrightarrow K$ and the isomorphims are compatible with the maps $i_n: G_n \longrightarrow G_{n+1}$. This condition is satisfied if we choose $Q_n$ and 
$U_n$ such that $Q_n \mapsto q_n$ and $U_n \mapsto u_n$ under $W_m(K) \longrightarrow K$. Also we want the connected-\'etale sequence of $\mathbb{G}$ to split over $W_m(K')$. This 
means that $Q_n$ must have a $p^n$-th root in $W_m(K')$. All of these are satisfied if we set $Q_n$ and $U_n$ to be the Teichm\"{u}ller lifts of $q_n$ and $u_n$ respectively. 
Let
\begin{eqnarray}
 f: K^* &\longrightarrow& W_m(K)^{*}, \nonumber \\
 a &\longmapsto& (a,0,0,...,0) \nonumber
\end{eqnarray}
be the Teichm\"{u}ller map. 
Thus if we set $Q_n=f(q_n)=(q_n,0,0,...,0 )$ and $U_n=f(u_n)=(u_n,0,0,...,0)$, the corresponding $\mathbb{G}$ is the required $p$-divisible group. 
\end{proof}

\section{The universal formula}
Now we can use the results of the previous sections to prove Theorem \ref{mainthm} stated in \textsection{1}.
\begin{proof}[Proof of Theorem \ref{mainthm}]
Let $p$ be any prime number. Recall that we defined the ring $A$ as
\[
 A= F[J,1/\phi_p(J)].
\]
Let $K$ be the fields of fractions of $A$ with a fixed algebraic closure $\bar{K}$. We define the ring $R$ to be the localization of $A$ at the element $J(J-1728)$, i.e.
\begin{eqnarray}
R=F[J,1/J(J-1728)\Phi_p(J)]. \nonumber
\end{eqnarray}
We define the elliptic curve $E/R$ as
\begin{eqnarray}
 E: y^2+xy=x^3-36x/(J-1728)-1/(J-1728). \nonumber
\end{eqnarray}
Note that $j(E)=J$, $\Delta(E)=J^{2}/(J-1728)^2$ and $E$ is ordinary. So the hypotheses of Theorem \ref{mainthmB} is satisfied. With the same notation of Theorem \ref{mainthmB}, 
for any positive integer $m$ we have the elliptic curve 
$\mathbb{E}_m$ over $W_{m}(R')$ with $j$-invariant $j(\mathbb{E}_m)=(j_0,j_1,j_2,...,j_{m-1})$. Now for any $j_0 \in k^{\text{ord}} \setminus \{0,1728\}$, 
the homomorphism $R' \longrightarrow k$ induced by $J \longmapsto j_0$ maps $E$ to an ordinary 
elliptic curve over $k$ with $j$-invariant $j_0$, say $\tilde{E}$. Similarly it maps $\mathbb{E}_m$ to the canonical lifting of $\tilde{E}$. 
We set $f_i=j_i$ for all $i$. Now we show that $j_i \in R$. 

  Let $K$ and $K'$ be the fields of fractions of $R$ and $R'$ respectively. Consider the elliptic curve 
\begin{eqnarray}
 \mathbb{E}_m \otimes_{W_m(R')} {W_m(K')} \nonumber
\end{eqnarray}
obtained via the inclusion $W_m(R') \hookrightarrow W_m(K')$.
It is a lifting of the generic fiber of $E$ denoted by $E_{K'}=E \otimes_{R'} K'$ and has a split exact connected-\'etale sequence over $W_m(K')$. 
So it is the canonical lifting of $E_{K'}$.
Its $j$-invariant is in $W_{m}(R')$ as it is obtained from $\mathbb{E}_m$. 
But Theorem \ref{descent_to_sep} implies that the $j$-invariant of the canonical lifting of $E_{K}$ which is tautologically equal to the $j$-invariant of 
$\mathbb{E}_m \otimes_{W_m(R')} {W_m(K')}$ is indeed in $W_{m}(K)$. So each $j_i \in R' \cap K = R$ since $R$ is integrally closed. 
Now we will show that $j_i\in A$, i.e. $j_i$ are regular at 0 and 1728 provided that these are ordinary. 
We will also prove that $(0,j_1(0),j_2(0)...,)$ is the $j$-invariant of the canonical lifting of the elliptic curve with $j$-invariant zero, and similarly for 1728. 

  We may assume $p \geq 5$ bacause $0=1728$ is supersingular in characteristic 2 and 3. Let $\mu_3 \neq 1$ be a fixed cube root
of 1. Let $L=F(\mu_3,\sqrt3)$, $a=\sqrt[3]{J}$ and $b=2.3^{-1}.\sqrt{J-1728}/\sqrt{3}$. 
  
  Let $B=L[J, \sqrt[3]{J},\sqrt{J-1728},1/\phi_{p}(J)]$ and $E$ be the scheme over $B$ defined as
\[
 E':y^2=x^3+ax+b.
\]
Note that $B$ is an integral extension of $A$ and $E'$ is an elliptic curve over $B$ with $j(E')=J$ and $\Delta(E)=-4^3.1728 \neq 0$. Now $E'$ and $E$ are isomorphic over 
$\bar{K}$ since they have the same $j$-invariant. So the $j$-invariants of the canonical lifting of $E$ and $E'$ denoted by $\mathbb{E}_m$ and $\mathbb{E}_m'$ are the same. 
But by Theorem \ref{mainthmB} we have $j(\mathbb{E}') \in W_m(B')$ where $B'$ is the perfect closure of $B$. So $j(\mathbb{E}')=j(\mathbb{E})=(j_0,j_1,j_2,...,j_{m-1})$ where
$j_i \in B'$. But previously we proved that $j_i\in R$. Now $B'$ is an integral extension of $A$ and 
$A$ is integrally closed in $R$. Thus $R \cap B' =A$, and so $j_i \in A$. In particular $j_i$ are regular at any element of $k^{\text{ord}}$.

  Now let $j_0=0 \in k^{\text{ord}}$. Consider the maximal ideal $(J) \subset A$. Since $S'$ is an integral extension of $A$ there exists a prime ideal $q'$ of $S'$ such that 
$q' \cap A =(J)$. Integrality of $S'$ also implies that $q'$ is a maximal ideal. Let $\kappa=S'/q'$. Since $J\, (\text{mod } q')=0$ we have that
\[
 \mathbb{E}_m' \otimes_{W_m(S')} W_m(\kappa)
\]
is the canonical lifting of the elliptic curve with $j$-invariant zero. The $j$-invariant of this canonical lifting is just
\[
 (0,\overline{j_1}, \overline{j_2},...,\overline{j_{m-1}})
\]
where $\overline{j_i}=j_i \, (\text{mod } q')$. But $A/(J) \rightarrow S'/q'$ is injective and $j_i \in A$, so $j_i \, (\text{mod } q')$ is the image of $j_i \, (\text{mod } (J))$ 
under this injection. But 
$j_i \, (\text{mod } (J)) = j_i(0)$. The same argument also works for the maximal ideal $(J-1728)$ provided that $j_0=1728$ is in $k^{\text{ord}}$. This completes the proof of (i).


  Now we prove (ii). Since $J=0=1728$ is supersingular for $p=2,3$, we may assume that $p \geq 5$. Let $E$ be any ordinary elliptic curve over $k$
with $j(E)=j_0 \in k$, and let $\mathbb{E}$ be its canonical lifting over $W(k)$. Let $\Omega$ be a fixed algebraic closure of the field of fractions of $W(k)$. 
By \cite[\textsection{V}.3]{Messing} we have that 
\begin{eqnarray}
 \text{End}(\mathbb{E}) \longisomto \text{End}(E) \nonumber
\end{eqnarray}
via the reduction modulo $p$ map. Now take any $p$ such that $j_0=0$ is an ordinary $j$-value in $k$. 
Then the automorphism group of $E/k$, Aut$_{k}(E)$ has order 6 and by the above isomorphism we also have that
Aut$_{\Omega}(\mathbb{E}\otimes \Omega)$ has order at least 6. But this can happen if and only if $j(\mathbb{E})=0$, i.e. in Witt vector notation
$\Theta(0)=(0,0,0...)$. Similarly if $E/k$ with $j(E)=j_0=1728$ is ordinary for some $p$, then we have that Aut$_{\bar{k}}(E)$ has order 4. So that $j(\mathbb{E})=1728$.
\end{proof}

\end{document}